\documentclass[]{interact}                                                       

\usepackage[caption=false]{subfig}

\usepackage[natbibapa,nodoi]{apacite}
\setlength\bibhang{12pt}

\usepackage{xcolor}

\theoremstyle{plain}
\newtheorem{theorem}{Theorem}[section]

\newtheorem{corollary}[theorem]{Corollary}

\theoremstyle{definition}
\newtheorem{definition}[theorem]{Definition}

\theoremstyle{remark}

\title{\LARGE \bf
State estimator design using Jordan based long short-term memory networks
}

\author{
\name{Avneet Kaur\textsuperscript{a} and Kirsten Morris\textsuperscript{a}\thanks{CONTACT Kirsten Morris. Email: kmorris@uwaterloo.ca}}
\affil{\textsuperscript{a}Department of Applied Mathematics, University of Waterloo, Ontario, Canada}
\thanks{*This work was supported by NSERC and the Faculty of Mathematics, University of Waterloo}%
}

\begin{document}
\maketitle
\thispagestyle{empty}
\pagestyle{empty}

\begin{abstract}
State estimation of a dynamical system refers to estimating the state of a system given an imperfect model, noisy measurements and some or no information about the initial state. While Kalman filtering is optimal for estimation of linear systems with Gaussian noises, calculation of optimal estimators for nonlinear systems is challenging. We focus on establishing a pathway to optimal estimation of high-order systems by using recurrent connections motivated by Jordan recurrent neural networks(JRNs). The results are compared to the corresponding Elman structure based long short-term memory network(ELSTM) and the KF for linear and EKF for nonlinear systems. The results suggest that for nonlinear systems, the use of long short-term memory networks can improve estimation error and also computation time. Also, the Jordan based long short-term memory networks(JLSTMs) require less training to achieve performance  similar to ELSTMs.
\end{abstract}

\begin{keywords}
    state estimation, machine learning, neural networks, long short-term memory networks
\end{keywords}

\section{Introduction}
The problem of estimating the state of a dynamical system given a noisy model, noisy measurements and partial information about the initial state is called state estimation. In practical applications estimating the state is a challenging task for various reasons. The complexity of high-order systems, availability of a finite number of measurements, inaccurate measurements, incomplete information about the initial state and errors in model can make it difficult to estimate the state of the system. Some review papers on  state estimation are \cite{kutschireiter2020hitchhiker, bernard2022observer, jin2021new, trends_of_estimation}.

Various model-based approaches to the problem of state estimation exist. The Kalman filter(KF) in  \cite{kalman1960new} is a recursive and easy to implement filter. It is used for optimal state estimation of linear systems with Gaussian stochastic noises. The most obvious generalization of this filter to nonlinear systems is the extended Kalman filter(EKF). The EKF relies on linearization of the system at each time-step. Such a generalization does not work well for many nonlinear systems and only local convergence guarantees are available (see for example, \cite{krener_2002,AfsharGermMorris}). Furthermore, the EKF is not  optimal. Another common approach is to use the unscented Kalman filter which has lesser linearization error than the EKF. The idea is to chose some specific sample points instead of using the entire density function to propagate means and co-variances. The nonlinear transformation is then applied only on the sample points leading to a decrease in the linearization error. But it is still not an optimal method and can be computationally intensive. To perform state estimation in an optimal manner, the Kalman filter approach is extended in  \cite{mortensen} to continuous-time nonlinear state-space systems. The idea is to estimate the initial condition noise and process noise which in turn provides the optimal state estimator. Certain assumptions ensure that an optimum for the maximum likelihood cost function for continuous-time systems exists.  \cite{moireau} extended  the results in \cite{mortensen} to discrete-time state space systems. However, this approach involves solving the Hamilton-Jacobi Bellman equation which in itself is a challenging task as it is a second-order nonlinear partial differential equation whose solution is  affected by the curse of dimensionality (see for example, \cite{krener_2003, pequito,kang2017mitigating, nakamura2021adaptive}). Thus, even though optimal, this approach is not widely used. 

Using only input-output data to identify states is called system identification or trajectory tracking. A major difference between these methods and state estimation is that they do not use the model, even when it is available. Recurrent neural networks have been shown to work well for system identification (see for example, \cite{park2020analysis}. Jordan recurrent neural networks(JRNs) have been used for system identification by several researchers, including \cite{kasiran2012mobile, wu2019time},  and have been shown to work as well as Elman recurrent neural networks(ERNs). Convergence analysis, with some assumptions, of JRNs as well as ERNs for system identification has been discussed in \cite{kuan_convergence_1994}. 

To predict future values based on observed trends in data is called time-series analysis or forecasting. Recurrent neural networks have been used for forecasting from several years. ERNs provided the best estimation properties when compared with the method of least squares and feedforward neural networks for noisy time series data with fully observable states in \cite{gencay1997nonlinear}. In \cite{llerena2021forecasting}, an Elman-based long short-term memory (ELSTM) network architecture is explored for different systems and compared with an EKF for filtering. The authors conclude that the proposed ELSTM structure performs better than the EKF for nonlinear systems. In \cite{kandiran2019comparison}, authors show that the ERN performs at least as well as a feedforward neural network for Lyapunov exponents forecasting, discussing cases where it performs better in detail as well. 

 Since a model, when available, can help improve  estimate accuracy,  methods combining model as well as data are attractive. Machine learning approaches to state estimation are being explored. A machine learning framework that overcame the requirement to find solutions to the Hamilton-Jacobi Bellman partial differential equation in the continuous-time case is discussed in \cite{kunisch}. A neural network to approximate the gradient of the solution of Hamilton-Jacobi Bellman equation is used. The equation is exploited to construct the filter which is referred to as the observer gain. It is shown that the results coincide with the Kalman-Bucy filter for a linear problem. A hybrid approach, which utilised the KF framework of correction and prediction for discrete-time systems, is discussed in \cite{gao_2019}.  Even though the focus of this work was trajectory tracking, the modified assumptions and use of model, led to the problem being that of state estimation. Recurrent neural networks are used, in particular, long short-term memory networks, to approximate the state from observations, initial data and noise estimates. A two step process of prediction and correction is used and two frameworks are presented, one Bayesian and the other non-Bayesian. For the Bayesian approach, the neural network is designed so that it predicts and filters iteratively within the same network. This framework is supported by using Bayes rule. For the non-Bayesian approach two deep ELSTM networks are used, one network for prediction and the other for filtering, based on the fact that neural networks are individually capable of performing both the steps independently. Thus, one network performed prediction without measurements and the other network corrected the predictions by using new measurements. This approach is considered hybrid as the model is used for training data generation. \cite{ramos} used nonlinear regression on data generated by solving the system and estimator dynamics to approximate a Luenberger estimator. A Luenberger estimator for discrete-time nonlinear systems  is found by approximating the mapping using unsupervised learning in \cite{dl_based_observer}. In \cite{benosman_borggaard_red_order}, a method to design robust, low-order observers for a class of spectral infinite-dimensional nonlinear systems was discussed. This method included a data-driven learning algorithm to optimise observer performance. Jun Fu \cite{zhijunfu} proposed a multi-time scales neural-network based adaptive  estimator design. A deep-learning method to solve the Hamilton-Jacobi Bellman equation for state estimation was proposed in \cite{adhyaru}. A neural network was used to approximate the solution, which further helped find the estimator gain. \cite{benosman_borggaard_red_order} discuss a robust, low order estimator using principal orthogonal decomposition(POD)  for a class of spectral infinite-dimensional nonlinear systems. A data-driven method  is used to auto-tune the estimator gains. The results are tested on 1D Burgers equation \cite{benosman_borggaard_red_order} and 2D Boussinesq equations \cite{benosman_borggaard_red_order_2}.

Reinforcement learning-based approaches are also becoming popular due to recent advancements in the field. In \cite{rl_paper}, the authors proposed a reinforcement learning framework combined with a KF for tracking ground vehicles using an integrated navigation system, and called it an adaptive KF navigation algorithm. The state of the system  was estimated using a KF and then used  to calculate rewards for a reinforcement learning framework which updated the process noise covariance. In \cite{patrick} the problem is viewed as a Bayes-adaptive Markov decision process and solved online using Monte Carlo tree search with an unscented KF to account for process noise and parameter uncertainty. For details about Monte Carlo tree search see \cite{rl_book}. 

Use of a reduced-order system is a common approach since it helps overcome the curse of dimensionality. A deep state estimator which used a neural network methodology to develop a nonlinear relationship between the measurements and a reduced order state was developed in \cite{nair_goza}. This relationship was commonly approximated as linear and then they used the advancements in the field of deep learning to make better approximations. The approximated reduced-order state was used as an initial condition to estimate the full state. His proposed methodology outperformed common linear estimation algorithms when tested on 1D Burgers equation and 2D Boussinesq equations. Several neural networks for state estimation of low-order systems were proposed in \cite{xie_deep_2021}. A recurrent neural network based on the principles of a decoupled EKF for state estimation was introduced in \cite{yadaiah} and used on reduced order models.

While feedforward neural networks do not link outputs or the hidden states at the previous time-step to the one at the next time-step, recurrent neural networks have varying structures linking outputs and/or hidden states at different time-steps. This property is of great importance for us, since in dynamical systems,  the state at the previous time-step, plus any inputs, determine the state at the next time-step. Another advantage of recurrent neural networks is that they can have a nonlinear activation function which is helpful for estimating the states of nonlinear systems. While this suggests that JRNs would be a good choice for state estimation of sequences, when it comes to longer sequences, training them can be extremely time-consuming (see for example, \cite{manaswi2018rnn}). This motivates the use of long short-term memory networks. Furthermore, choosing the right type of connections in a network is important. This can help the network to converge faster leading to lesser training time. It can also help in stability analysis and convergence analysis. 

In this work, we combine the idea of a state estimation algorithm with the tools of machine learning. Section \ref{sec:rnns} discusses the structure of an ERN and a JRN followed by a universal approximation theorem for state estimation using JRNs. The  Jordan recurrent network is more closely related to the structure of a dynamical system than Elman networks which suggests that it might be more appropriate for state estimation. The Elman  long short-term memory network (ELSTM) framework is thus extended to a Jordan-based long short-term memory (JLSTM) network  in hope that this structure will have reduced training time.  In Section \ref{sec:implementation} implementation of  ELSTM and JLSTM is described. Implementation of both structures was done similarly to ensure valid comparison.  In section \ref{sec:numerical_examples}, both structures are used to construct estimators for a number of examples.  In Section \ref{sec:results}, the  results of  ELSTM, JLSTM and KF/EKF are discussed. Both neural network-based estimators compare well to the traditional Kalman filters.
However, the JLSTM network takes lesser training time as compared to an ELSTM to achieve similar performance. 

\section{State estimation using simple recurrent neural networks}
\label{sec:rnns}
This work considers noisy discrete-time state space systems
\begin{align}
    \begin{split}
        x^{(t+1)}&=f(x^{(t)})+\omega^{(t+1)}  \\
        y^{(t+1)}&=h(x^{(t+1)})+\nu^{(t+1)} \\
        x^{(0)}&=x^0+\bar{x}^0 
        \label{eqn:noisy_disc_state_space_sys}
    \end{split}
\end{align}
where $x^{(t+1)}\in \mathcal{X} \subseteq \mathbb{R}^n, n \in \mathbb{N}$ is the state vector of the system at time-step $t+1, y^{(t+1)}\in \mathcal{Y} \subseteq \mathbb{R}^m, m \in \mathbb{N}$ is the measurement vector at time-step $t+1, \omega^{(t+1)}$ is the process noise vector at time-step $t+1$, $\nu^{(t+1)}$ is the measurement noise vector at time-step $t+1$, $x^0$ is the true initial condition and $\bar{x}^0$ is the initial condition noise. The functions $f$ and $h$ are known and  continuously differentiable. The aim of state estimation is to estimate the state $x^{(t+1)}$ based on new measurement $y^{(t+1)},$ previous state estimate $\hat{x}^{(t)}$, and  some information on the distribution of the  initial condition $x^{(0)}.$ 

\begin{definition}[Activation function] 
    A function $\sigma(\cdot): \mathbb{R} \rightarrow [a,b]$ is called an activation function, if $\sigma$ is monotonically increasing, $\lim\limits_{z \to -\infty} \sigma(z)=a$ and $\lim\limits_{z \to \infty} \sigma(z)=b.$
\end{definition}

\begin{figure*}
\centering
    \framebox{\includegraphics[scale=0.09]{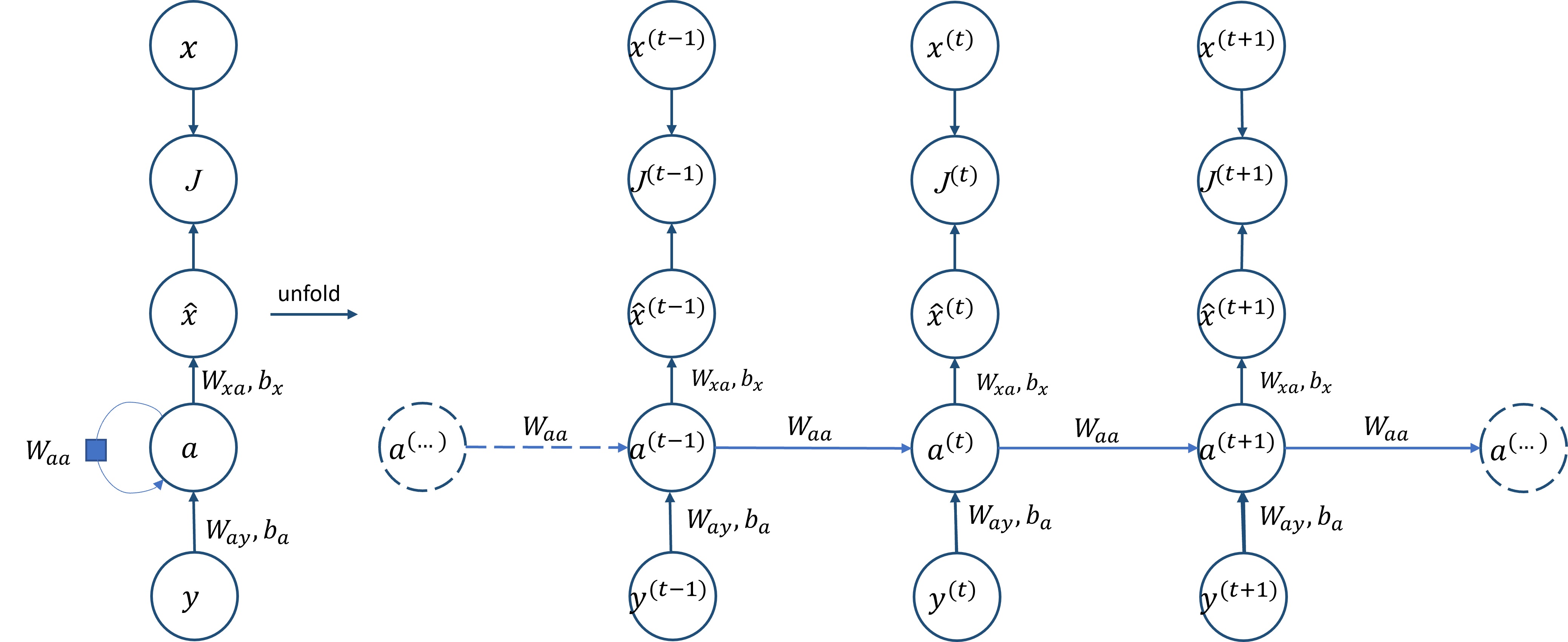}}
    \caption{The structure of an Elman recurrent neural network(ERN) for state estimation. It uses hidden to hidden recurrent connections. The symbols $y^{(t)}, a^{(t)}, x^{(t)}$ and $\hat{x}^{(t)}$ represent input measurement vector, hidden layer vector, true state vector and estimated state vector respectively at time $t$. The cost function $J$ is considered to be mean squared error(MSE). The weights and biases are represented by $W_{ay}, W_{aa}, W_{xa}, b_a$ and $b_x.$}
    \label{fig:ern}
\end{figure*}
 An Elman recurrent network (ERN) can been used for state estimation. In an ERN, the hidden layer at the previous time-step connects to the hidden layer at the next time-step(Fig. \ref{fig:ern}). ERNs are proven to be universal approximators for dynamical systems in \cite{schafer2007recurrent}. For state estimation, one can choose the inputs of the ERN at time-step $t$ to be the measurements at time-step $t$ and the outputs of the ERN to be the estimated state at time-step $t.$ Thus, an ERN  acts like a filter. The forward propagation of an ERN for state estimation is 
\begin{align}
    \begin{split}
        a^{(t)}&=\sigma(W_{ay} y^{(t)}+W_{aa} a^{(t-1)}+b_a) \\
        \hat{x}^{(t)}&=W_{xa} a^{(t)}+b_x
        \label{eqn:ern}
    \end{split}
\end{align}
where $\sigma(\cdot)$ is the activation function and $\hat{x}^{(t)}$ is the estimated state vector at time-step $t$. The hidden layer vector at time-step $t$ is $a^{(t)}.$ The measurement vector at time-step $t$ is denoted by $y^{(t)}.$ The weights and biases of the network are represented by $W_{ay}, W_{aa}, W_{xa}, b_a$ and $b_x$ respectively.
The approximation theorem for ERNs of a dynamical system presented in \cite{schafer2007recurrent} can be extended to estimation with slight modifications.  Since the proof is very similar to that in \cite{schafer2007recurrent}, it is omitted. 

In a JRN,  instead of using the previous hidden vector in calculation of the updated estimate,  the  estimate at the previous  time-step $t$ is to update the state  (Fig. \ref{fig:jrn}).  Thus, a Jordan-based recurrent neural network structure mimics the structure of a dynamical system \eqref{eqn:noisy_disc_state_space_sys}.   This suggests that training and accuracy of an estimator  may be improved over other structures by using a JRN. 
The forward propagation of a JRN for state estimation(Fig.\ref{fig:jrn}) is 
\begin{align}
    \begin{split}
        a^{(t)}&=\sigma(W_{ay} y^{(t)}+W_{ax} \hat{x}^{(t-1)}+b_a) \\
        \hat{x}^{(t)}&=W_{xa} a^{(t)}+b_x
        \label{eqn:jrn}
    \end{split}
\end{align}
where $\sigma(\cdot)$ is the activation function and $\hat{x}^{(t)}$ is the estimated state vector at time-step $t$. The hidden layer vector at time-step $t$ is given by $a^{(t)}.$ The weights and biases of the network are represented by $W_{ay}, W_{ax}, W_{xa}, b_a$ and $b_x$ respectively.

\begin{figure*}
    \centering
    \framebox{\includegraphics[scale=0.09]{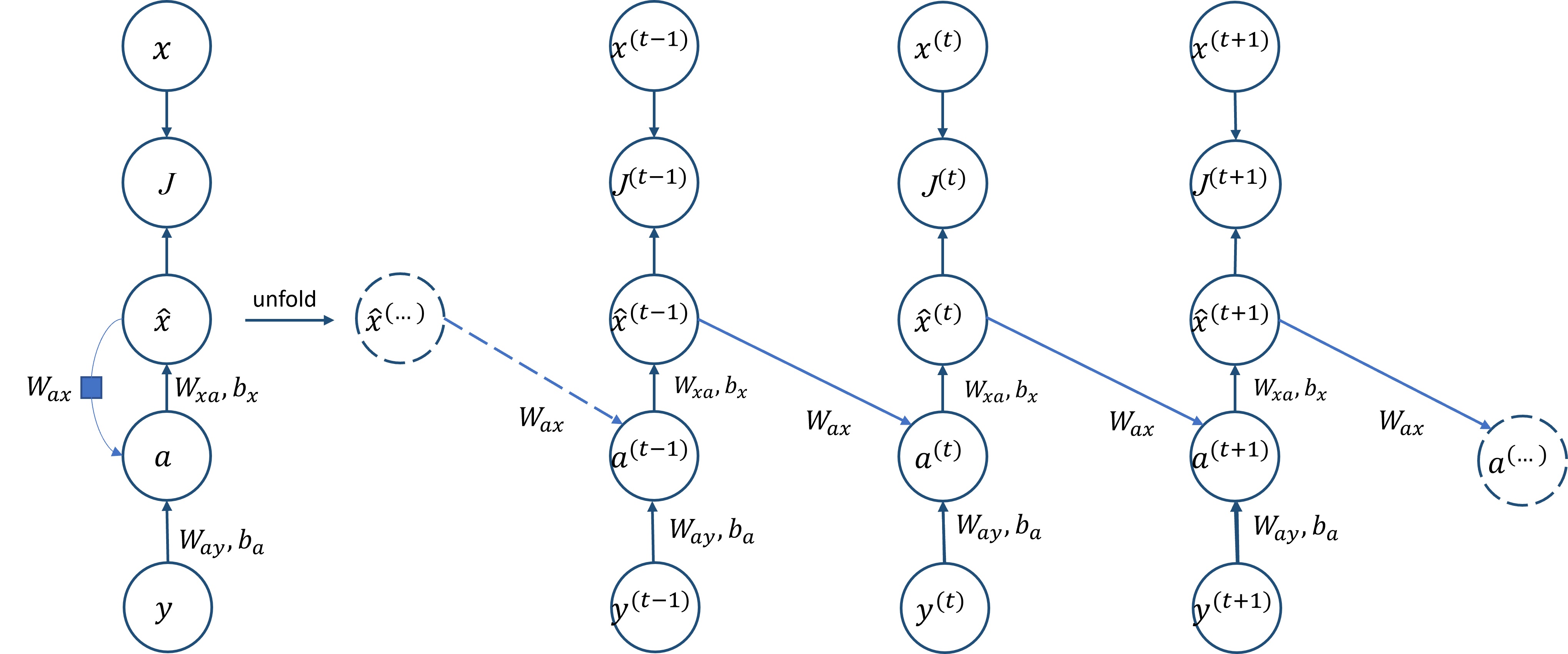}}
    \caption{The structure of the Jordan recurrent network(JRN) proposed for state estimation. It uses output-to-hidden recurrent connections similar to the filter's dynamical system. The symbols $y^{(t)}, a^{(t)}, x^{(t)}$ and $\hat{x}^{(t)}$ represent input measurement vector, hidden layer vector, true state vector and estimated state vector respectively at time $t$. Cost function $J^{(t)}$ is the mean squared error at time $t$. The weights and biases of the network are represented by $W_{ay}, W_{ax}, W_{xa}, b_x$ and $b_y.$ }
    \label{fig:jrn}
\end{figure*}

It will now be shown that JRNs are universal approximators for an estimator. 
We first recall the definitions and main results first presented in \cite{hornik1989multilayer} and later mentioned in \cite{schafer2007recurrent}. 

\begin{definition}
    Let $A^{n},n \in \mathbb{N}$ be the set of all affine functions  $A(\cdot):\mathbb{R}^{n} \rightarrow \mathbb{R}$ defined as $A(z)=w.z-b$ where $w,z \in \mathbb{R}^{n}, b \in \mathbb{R}$ and ‘.’  denotes dot product.
\end{definition}

\begin{definition}
    For any Borel-measurable function $\sigma(\cdot): \mathbb{R} \rightarrow \mathbb{R}$ and $n \in \mathbb{N}$ define 
    \begin{multline}
    \Sigma^{n}(\sigma)= \{(NN: \mathbb{R}^{N} \rightarrow \mathbb{R}): NN(z)=\sum_{j=1}^{J}\theta_j \sigma(A_{j}(z)), \\
    z\in \mathbb{R}^{n}, \theta_j \in \mathbb{R}, A_{j} \in A^{n}, J \in \mathbb{N}\}.
    \end{multline}
\end{definition}
The function class $\Sigma^{n}(\sigma)$ can be written in matrix form as 
$$NN(z)=\theta \sigma(Wz-b),$$
where $x \in \mathbb{R}^{n}, \theta,b \in \mathbb{R}^{\tilde{h}}$ and $W \in \mathbb{R}^{\tilde{h} \times n}.$ Here function $\sigma$ is applied component-wise. Similarly, the for function class $JRN^{m,n}(\sigma), \sigma$ is defined component-wise.

    Let $\mathcal{M}^n$ and $\mathcal{C}^n$ be the set of all Borel-measurable and continuous functions (both from $\mathbb{R}^n \rightarrow \mathbb{R}$) respectively. Let the Borel $\sigma$ algebra of $\mathbb{R}^N$ be denoted by $\mathbb{B}^n.$ Thus, $(\mathbb{R}^n, \mathbb{B}^n)$ refers to the $n$-dimensional Borel measurable space.

Clearly, for every continuous Borel-measurable function $\sigma$, $$\Sigma^n(\sigma) \subset \mathcal{C}^n \subset \mathcal{M}^n.$$ 

\begin{definition}
    Let $(X,\rho)$ be a metric space and $U,T \subseteq X.$ Then $U$ is $\rho-$dense in $T,$ if $\forall \epsilon >0$ and $\forall t \in T, \exists u \in U,$ such that $\rho(u,t)<\epsilon.$
\end{definition}

\begin{definition}
    A subset $U$ of $\mathcal{C}^n$ is said to be uniformly dense on a compact domain in $\mathcal{C}^n$ if for every compact subset $K \subseteq \mathbb{R}^n, U$  is $\rho_K$-dense in $\mathcal{C}^n,$ where for $g_1,g_2 \in \mathcal{C}^n,$ 
    \begin{equation*}
        \rho_K(g_1,g_2)\equiv \sup_{x \in K}|g_1(x)-g_2(x)|.
    \end{equation*}
\end{definition}

\begin{definition}
    Let $\mu$ be a probability measure on $(\mathbb{R}^n, \mathbb{B}^n).$ Functions $g_1,g_2 \in \mathcal{M}^n$ are said to be $\mu$-equivalent if $\mu\{x\in\mathbb{R}^n:g_1(x)=g_2(x)\}=1.$ 
\end{definition}

\begin{definition}
    Define the metric $\rho_{\mu}: \mathcal{M}^n \times \mathcal{M}^n \rightarrow \mathbb{R}^{+}$ on $(\mathbb{R}^n,\mathbb{B}^n)$ as  
    \begin{equation}
        \rho_{\mu}(g_1,g_2)=\inf\{\epsilon>0:\mu\{x\in \mathbb{R}^n:|g_1(x)-g_2(x)|>\epsilon\}<\epsilon\}
    \end{equation} 
    where $\mu$ is a probability measure on $(\mathbb{R}^n,\mathbb{B}^n).$
\end{definition}
 Note that if functions $g_1$ and $g_2$ are $\mu$-equivalent, then $\rho_{\mu}(g_1,g_2)=0.$

\begin{theorem}\cite{hornik1989multilayer} (Universal approximation theorem for feed-forward neural networks)
\label{theorem:ffnn_theorem}
    For any activation function $\sigma:\mathbb{R} \rightarrow [0,1],$ any dimension $n$ and any probability measure $\mu$ on $(\mathbb{R}^{n},\mathbb{B}^{n}), \Sigma^{n}(\sigma)$ is uniformly dense on a compact domain in $\mathcal{C}^{n}$ and $\rho_{\mu}-$dense in $\mathcal{M}^{n}.$ 
\end{theorem}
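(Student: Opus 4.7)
The plan is to split the theorem into its two assertions and handle each in turn, reducing the measurable case to the continuous case. After a linear rescaling, I would assume without loss of generality that $\sigma:\mathbb{R}\to[0,1]$ with $\lim_{z\to-\infty}\sigma(z)=0$ and $\lim_{z\to\infty}\sigma(z)=1$, since all of $\Sigma^n(\sigma)$'s defining operations (linear combinations and affine reparametrisation) commute with such a rescaling.

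For the uniform density on compacta in $\mathcal{C}^n$, I would pursue a Stone--Weierstrass strategy on the uniform closure $\overline{\Sigma^n(\sigma)}$ over a compact $K\subset\mathbb{R}^n$. The three ingredients to verify are: (i) the closure contains the non-zero constants, which follows because $\sigma(\lambda z)\to 1$ and $\sigma(-\lambda z)\to 0$ boundedly as $\lambda\to\infty$; (ii) the closure separates points, which is immediate since the affine argument $A(z)=w\cdot z - b$ ranges over all of $A^n$; and (iii) the closure is an algebra, i.e.\ closed under pointwise products. Step (iii) is the genuinely delicate part and would follow the Hornik--Stinchcombe--White technique: one first shows that differences of scaled and shifted copies $\sigma(\lambda(w\cdot z - b))-\sigma(\lambda(w\cdot z - b'))$ approximate indicator functions of slabs, then uses Riemann-type sums of such indicators to approximate functions of the form $z\mapsto\cos(w\cdot z+b)$ uniformly on $K$. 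Since cosines form a subalgebra of $\mathcal{C}(K)$ under the product-to-sum identity, this lifts algebra closure back into $\overline{\Sigma^n(\sigma)}$. Stone--Weierstrass then concludes $\overline{\Sigma^n(\sigma)}=\mathcal{C}(K)$, giving uniform density on compacta.

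For the $\rho_\mu$-density in $\mathcal{M}^n$, I would reduce to the continuous case via Lusin's theorem. Given $g\in\mathcal{M}^n$ and $\varepsilon>0$, pick a compact $K\subset\mathbb{R}^n$ with $\mu(\mathbb{R}^n\setminus K)<\varepsilon/2$ and a continuous $\tilde g$ agreeing with $g$ on $K$. The uniform-density result applied to $\tilde g$ on $K$ supplies $NN\in\Sigma^n(\sigma)$ with $\sup_{z\in K}|NN(z)-\tilde g(z)|<\varepsilon$, so the set where $|NN-g|>\varepsilon$ is contained in $\mathbb{R}^n\setminus K$ and hence has $\mu$-measure below $\varepsilon$. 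This gives $\rho_\mu(g,NN)<\varepsilon$, as required.

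The main obstacle is the cosine-recovery step in (iii): one has to synthesize smooth oscillatory functions out of a single monotone, bounded, possibly discontinuous squashing function using only linear combinations, affine arguments, and uniform limits. Every other piece—rescaling $\sigma$, separating points, Stone--Weierstrass, Lusin's theorem—is a standard packaging step. The entire argument therefore hinges on this one approximation lemma for $\sigma$, which is exactly where the hypotheses on monotonicity and the two distinct limits $a\neq b$ are used in an essential way.
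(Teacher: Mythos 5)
The paper does not prove this theorem --- it is quoted verbatim from \cite{hornik1989multilayer} and used as a black box --- so the only meaningful comparison is with the original Hornik--Stinchcombe--White argument, and your sketch reconstructs exactly that route: Stone--Weierstrass applied to the trigonometric subalgebra, the slab-indicator/Riemann-sum lemma to pull cosine networks back into $\Sigma^{n}(\sigma)$ for an arbitrary (possibly discontinuous, monotone, bounded) squashing function, and Lusin's theorem to pass from $\mathcal{C}^{n}$ to $\mathcal{M}^{n}$ in the $\rho_{\mu}$ metric. You correctly identify the cosine-recovery step as the one place where monotonicity and the distinct limits are essential, and the $\rho_{\mu}$ reduction is stated correctly (the exceptional set lands inside the Lusin complement of measure less than $\varepsilon$). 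As a sketch this is faithful and contains no wrong turns; a full write-up would only need to fill in the uniform control near the jump points of the step-function approximation.
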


\begin{corollary}\cite{hornik1989multilayer} 
\label{corollary:ffnn_corollary}
Theorem \ref{theorem:ffnn_theorem} holds for the approximation of functions $\mathcal{C}^{m,n}$ and $\mathcal{M}^{m,n}$ by the extended function class $\Sigma^{m,n}.$ Thereby the metric $\rho_{\mu}$ is replaced by $$\rho^{n}_{\mu}:=\sum_{l=1}^{n} \rho_{\mu}(f_l,g_l).$$
\end{corollary}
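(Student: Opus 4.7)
The plan is to reduce the vector-valued approximation statement to the scalar case of Theorem \ref{theorem:ffnn_theorem} by a straightforward coordinate-wise argument. I would first unpack the implicit definitions: a function in $\mathcal{C}^{m,n}$ (resp.\ $\mathcal{M}^{m,n}$) is a tuple $(f_1,\dots,f_m)$ with each component $f_l \in \mathcal{C}^n$ (resp.\ $\mathcal{M}^n$), and $\Sigma^{m,n}(\sigma)$ consists of tuples $(g_1,\dots,g_m)$ with each $g_l \in \Sigma^n(\sigma)$, obtained by independently stacking $m$ scalar single-hidden-layer networks. With this identification the extended metric $\rho^n_\mu$ is literally the sum of $m$ scalar metrics $\rho_\mu(f_l,g_l)$, so bounding it reduces to simultaneously bounding each coordinate.

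For the uniform-on-compacta statement, I would fix $\epsilon>0$, a compact $K \subseteq \mathbb{R}^n$, and $f=(f_1,\dots,f_m) \in \mathcal{C}^{m,n}$. For each $l$, the uniform-denseness part of Theorem \ref{theorem:ffnn_theorem} yields some $g_l \in \Sigma^n(\sigma)$ with $\sup_{x \in K}|f_l(x)-g_l(x)| < \epsilon/m$. Assembling $g=(g_1,\dots,g_m) \in \Sigma^{m,n}(\sigma)$, the corresponding coordinate-summed distance is bounded by $m\cdot(\epsilon/m)=\epsilon$. The $\rho^n_\mu$-denseness claim in $\mathcal{M}^{m,n}$ is handled identically: for a probability measure $\mu$ on $(\mathbb{R}^n,\mathbb{B}^n)$, apply the second half of Theorem \ref{theorem:ffnn_theorem} to each $f_l$ with tolerance $\epsilon/m$, and sum.

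There is no serious obstacle; all of the approximation-theoretic content is already in Theorem \ref{theorem:ffnn_theorem}. The only care point is verifying that $\Sigma^{m,n}(\sigma)$ is closed under stacking independent scalar networks with possibly different widths $J_l$ and different weight matrices per coordinate, which is exactly how the extended class is defined. Once this is granted, the argument reduces to bookkeeping on $\epsilon$ and the triangle-type additivity of $\rho^n_\mu$ across coordinates.
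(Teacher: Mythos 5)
The paper states this corollary without proof, citing \cite{hornik1989multilayer}; your componentwise reduction --- applying the scalar version of Theorem \ref{theorem:ffnn_theorem} to each output coordinate with tolerance $\epsilon/m$ and summing over coordinates --- is exactly the standard argument behind that cited result, and it is correct for both the uniform-on-compacta and the $\rho_{\mu}$-denseness claims. The only cosmetic issue is indexing: the paper's $\rho^{n}_{\mu}$ sums over the $n$ output components of maps in $\mathcal{C}^{m,n}$, whereas you label the components by $m$; this does not affect the argument.
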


We now introduce the following definition to extend the universal approximation theorem for feedforward neural networks to universal approximation of internal states of a state space system by Jordan recurrent networks. 
 
\begin{definition}
    For any Borel-measurable function $\sigma$ and $m,n \in \mathbb{N}$ define ${JRN}^{m,n}(\sigma)$ as the class of functions 
    $$x^{(t+1)}=W_{xa}\sigma(W_{ay}y^{(t)}+W_{ax}x^{(t)}-b)$$
    where $x^{(t)} \in \mathbb{R}^n, y^{(t)} \in \mathbb{R}^{m},$ for $t=1,2,...,T.$ Also, $W_{ax} \in \mathbb{R}^{\tilde{h} \times n}, W_{ay} \in \mathbb{R}^{\tilde{h} \times m}, W_{xa} \in \mathbb{R}^{n \times \tilde{h}}$ are weight matrices and $\theta \in \mathbb{R}^{\tilde{h}}$ is the corresponding bias where $\tilde{h}$ is the dimension of the hidden state.
\end{definition}

\begin{theorem}(Universal approximation theorem of state estimators using Jordan recurrent neural networks(JRNs))
    Let $F(\cdot): \mathbb{R}^{n} \times \mathbb{R}^{m} \rightarrow \mathbb{R}^{n}$ be measurable and $h(\cdot): \mathbb{R}^{n} \rightarrow \mathbb{R}^{m}$ be continuous, the states $x^{(t)} \in \mathbb{R}^{n},$ and the measurements $y^{(t)} \in \mathbb{R}^{m}$ where $t=1,2,...,T$ for finite $T$. Then, any state estimator of the form 
   $$\bar{x}^{(t+1)}=F(\bar{x}^{(t)},y^{(t)})$$
   for the discrete-time dynamical system 
\begin{equation}
    \begin{split}
        x^{(t+1)}=f(x^{(t)}) \\
        y^{(t)}=h(x^{(t)})
    \end{split}
\end{equation}
    can be approximated by an element of the function class $JRN^{m,n}(\sigma)$ with an arbitrary accuracy, where $\sigma$ is a continuous activation function.
\end{theorem}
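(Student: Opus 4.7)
The plan is to observe that a single forward step of a JRN has exactly the structure of a single-hidden-layer feedforward network acting on the concatenated input $z^{(t)} := (\hat{x}^{(t)}, y^{(t)}) \in \mathbb{R}^{n+m}$, since $W_{xa}\sigma(W_{ay}y^{(t)} + W_{ax}\hat{x}^{(t)} - b) = W_{xa}\sigma([W_{ax}\ W_{ay}]\,z^{(t)} - b)$. Therefore, if we view the target stepwise estimator map $F : \mathbb{R}^{n+m} \to \mathbb{R}^n$ as the function to be approximated, the vector-valued feedforward universal approximation result (Corollary \ref{corollary:ffnn_corollary}) immediately furnishes a JRN-shaped approximator $\widetilde F(x, y) = W_{xa}\sigma(W_{ax} x + W_{ay} y - b)$ of the required form. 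The theorem then reduces to showing that iterating this one-step approximation over $t = 0, 1, \ldots, T$ does not cause the error to blow up.

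The remainder of the argument is an induction in time to control error propagation over the finite horizon. First I would fix a compact set $K \subset \mathbb{R}^{n+m}$ large enough to contain a neighborhood of every pair $(\bar{x}^{(t)}, y^{(t)})$ for $t = 0, \ldots, T$; this is possible because $T$ is finite and, by continuity of $h$ and boundedness of the initial data, the trajectories remain in a bounded region. Given a desired overall tolerance $\eta > 0$, I would apply Corollary \ref{corollary:ffnn_corollary} to obtain $\widetilde F$ satisfying $\sup_{(x,y) \in K}\|\widetilde F(x,y) - F(x,y)\| < \epsilon$, and then define $\hat{x}^{(t+1)} := \widetilde F(\hat{x}^{(t)}, y^{(t)})$ with $\hat{x}^{(0)} = \bar{x}^{(0)}$. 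Since $\widetilde F$ is Lipschitz in its first argument with constant $L \le \|W_{xa}\|\cdot\|W_{ax}\|\cdot\mathrm{Lip}(\sigma)$, the errors satisfy the recursion $\|\hat{x}^{(t+1)} - \bar{x}^{(t+1)}\| \le L\,\|\hat{x}^{(t)} - \bar{x}^{(t)}\| + \epsilon$, which yields $\|\hat{x}^{(T)} - \bar{x}^{(T)}\| \le \epsilon(L^T - 1)/(L - 1)$ (or $T\epsilon$ when $L = 1$); choosing $\epsilon$ small enough forces this below $\eta$.

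The main obstacle is a mild circularity: the Lipschitz constant $L$ depends on the approximating network, whose architecture (and hence $L$) may change as $\epsilon$ shrinks. I would handle this by picking a candidate network first, fixing the resulting $L$, and then re-invoking Corollary \ref{corollary:ffnn_corollary} with the sharpened tolerance $\eta(L - 1)/(L^T - 1)$ for a possibly wider network, taking care that any additional hidden units can only contract $L$ (e.g.\ by leaving extra rows of $W_{ax}$ small). A secondary subtlety is that $F$ is only assumed measurable, so Corollary \ref{corollary:ffnn_corollary} technically provides approximation in the $\rho_\mu$ (in-probability) metric rather than uniformly; this is resolved either by specializing to the continuous case typical of state estimation with continuous $f$ and $h$, where the uniform density part of Theorem \ref{theorem:ffnn_theorem} applies directly on $K$, or by choosing $\mu$ to concentrate on the compact trajectory region so that the exceptional set contributed by $\rho_\mu$ at each step remains negligible over the finite horizon $T$.
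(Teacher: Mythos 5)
Your overall strategy is the same as the paper's: view one JRN step as a single-hidden-layer feedforward map acting on the concatenated input $(\hat{x}^{(t)},y^{(t)})$, invoke Theorem \ref{theorem:ffnn_theorem} and Corollary \ref{corollary:ffnn_corollary} to approximate the one-step estimator $F$ uniformly on a compact set $K$ containing the trajectories, and then argue that the error cannot blow up over the finite horizon $T$. You in fact supply more detail than the paper, whose proof compresses the entire error-propagation step into the single sentence ``Since $F$ is continuous \dots\ $\|\bar{x}^{(t)}-\hat{x}^{(t)}\|_{\infty}<\epsilon$.'' You also correctly flag the two weak points of the statement: that $F$ is only assumed measurable while the argument needs (uniform) continuity on $K$, and that one must keep the iterates $\hat{x}^{(t)}$ inside $K$.

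However, your specific error-propagation mechanism has a genuine gap. You bound $\|\hat{x}^{(t+1)}-\bar{x}^{(t+1)}\|\le L\|\hat{x}^{(t)}-\bar{x}^{(t)}\|+\epsilon$ with $L$ the Lipschitz constant of the \emph{approximant} $\widetilde F$, and then need to shrink $\epsilon$; but $L$ depends on the network produced for that $\epsilon$, and the universal approximation theorem gives no control whatsoever over the Lipschitz constant of the approximant it returns. Your proposed repair --- re-invoking Corollary \ref{corollary:ffnn_corollary} at a sharper tolerance while ``taking care that any additional hidden units can only contract $L$'' --- is not something the corollary licenses: the more accurate network is an entirely new network and may have arbitrarily large weights, so the fixed $L$ from the first candidate is irrelevant. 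The standard way out (and the one the paper's one-line appeal to continuity of $F$ implicitly takes, following \cite{schafer2007recurrent}) is to split the triangle inequality the other way: $\|\widetilde F(\hat{x}^{(t)},y^{(t)})-F(\bar{x}^{(t)},y^{(t)})\|\le\|\widetilde F(\hat{x}^{(t)},y^{(t)})-F(\hat{x}^{(t)},y^{(t)})\|+\|F(\hat{x}^{(t)},y^{(t)})-F(\bar{x}^{(t)},y^{(t)})\|\le\epsilon+\omega_F\bigl(\|\hat{x}^{(t)}-\bar{x}^{(t)}\|\bigr)$, where $\omega_F$ is the modulus of uniform continuity of $F$ on $K$. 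Since $\omega_F$ is fixed independently of the network, a backward induction over $t=T,T-1,\dots,1$, choosing tolerances with $\epsilon+\omega_F(\epsilon_t)<\epsilon_{t+1}$, closes the argument and simultaneously keeps $\hat{x}^{(t)}$ in $K$. With that substitution (which requires assuming $F$ continuous, as the paper's own proof tacitly does despite the hypothesis stating only measurability) your proof is complete; as written, the induction step does not go through.
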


\begin{proof}
    We want to conclude that the estimator 
    $$\bar{x}^{(t+1)}=F(\bar{x}^{(t)},y^{(t)})$$
    can be approximated by a JRN of the form 
    $$\hat{x}^{(t+1)}=W_{xa}\sigma(W_{ay}y^{(t)}+W_{ax}\hat{x}^{(t)}-b) \quad \forall t=1,2,...,T$$
    with an arbitrary accuracy, where $W_{xa},W_{ax}$ and $W_{ay}$ are appropriate weight matrices, $b$ refers to bias and $\sigma$ is the required activation function. We will begin by using the universal approximation theorem for feed-forward neural networks and then use the continuity of $F$ to reach the conclusion.

    Let $\epsilon>0$ and $K\subseteq \mathbb{R}^{n} \times \mathbb{R}^{m}$ be a compact set which contains $(\bar{x}^{(t)},y^{(t)})$ and $(\hat{x}^{(t)}, y^{(t)}), \forall t=1,2,...,T.$ From Theorem \ref{theorem:ffnn_theorem} and Corollary \ref{corollary:ffnn_corollary}, we know that for any measurable function $F(\bar{x}^{(t)},y^{(t)}): \mathbb{R}^{n} \times \mathbb{R}^{m} \rightarrow \mathbb{R}^{n}$ and for an arbitrary $\delta>0,$ a function 
    $$NN(\bar{x}^{(t)}, y^{(t)})=W_{xa}\sigma(W_{ay}y^{(t)}+W_{ax}\bar{x}^{(t)}-b)$$
    with weight matrices $W_{xa} \in \mathbb{R}^{n \times \tilde{h}}, W_{ax} \in \mathbb{R}^{\tilde{h} \times n}, W_{ay} \in \mathbb{R}^{\tilde{h} \times m},$ bias $b \in \mathbb{R}^{\tilde{h}},$ and a component-wise applied continuous activation function $\sigma(\cdot):\mathbb{R}^{\tilde{h}} \rightarrow \mathbb{R}^{\tilde{h}}$ exists, such that $\forall t=1,2,...,T,$
\begin{equation}
    \sup_{\bar{x}^{(t)},y^{(t)}} \|F(\bar{x}^{(t)},y^{(t)})-NN(\bar{x}^{(t)},y^{(t)})\|_{\infty} < \delta.
    \label{eqn:sup_norm_for_rnn}
\end{equation}
    Corresponding to a recurrent neural network architecture, $\tilde{h}$ refers to the dimension of the hidden state.

    Since $F$ is continuous, corresponding to the $\delta >0$ in Eqn.(\ref{eqn:sup_norm_for_rnn}) the following condition holds
    $$\|\bar{x}^{(t)}-\hat{x}^{(t)}\|_{\infty}<\epsilon$$
    for the following recurrent neural network architecture
    $$\hat{x}^{(t+1)}=W_{xa}\sigma(W_{ay}y^{(t)}+W_{ax}\hat{x}^{(t)}-b) \quad \forall t=1,2,....,T$$
    where $T$ is finite. This completes the proof.
\end{proof}
 Thus, JRNs are a suitable choice for state estimation.

\section{State estimation using long short-term memory networks}
While the simple recurrent neural networks described in section \ref{sec:rnns} can be trained to perform as well as a  KF for linear systems and better than the EKF for nonlinear systems (see \cite{cdc_submission}), it has been observed that for complex problems and long term predictions, long short-term memory networks are generally a better choice as they help to resolve the vanishing gradient problem (see for example, \cite{manaswi2018rnn}) and tend to converge faster. 

Each LSTM network consists of a cell. In  order to handle long term dependencies, several gates are introduced. The forget gate decides the information that should not be carried forward to the next time-step. The input gate determines the new information that needs to be remembered in the cell state. The output gate decides which information will be output from the cell state. A more detailed  explanation of these gates, for an ELSTM can be found in, for example, \cite{manaswi2018rnn}.

\begin{figure}
\centering
    \framebox{\includegraphics[scale=0.1]{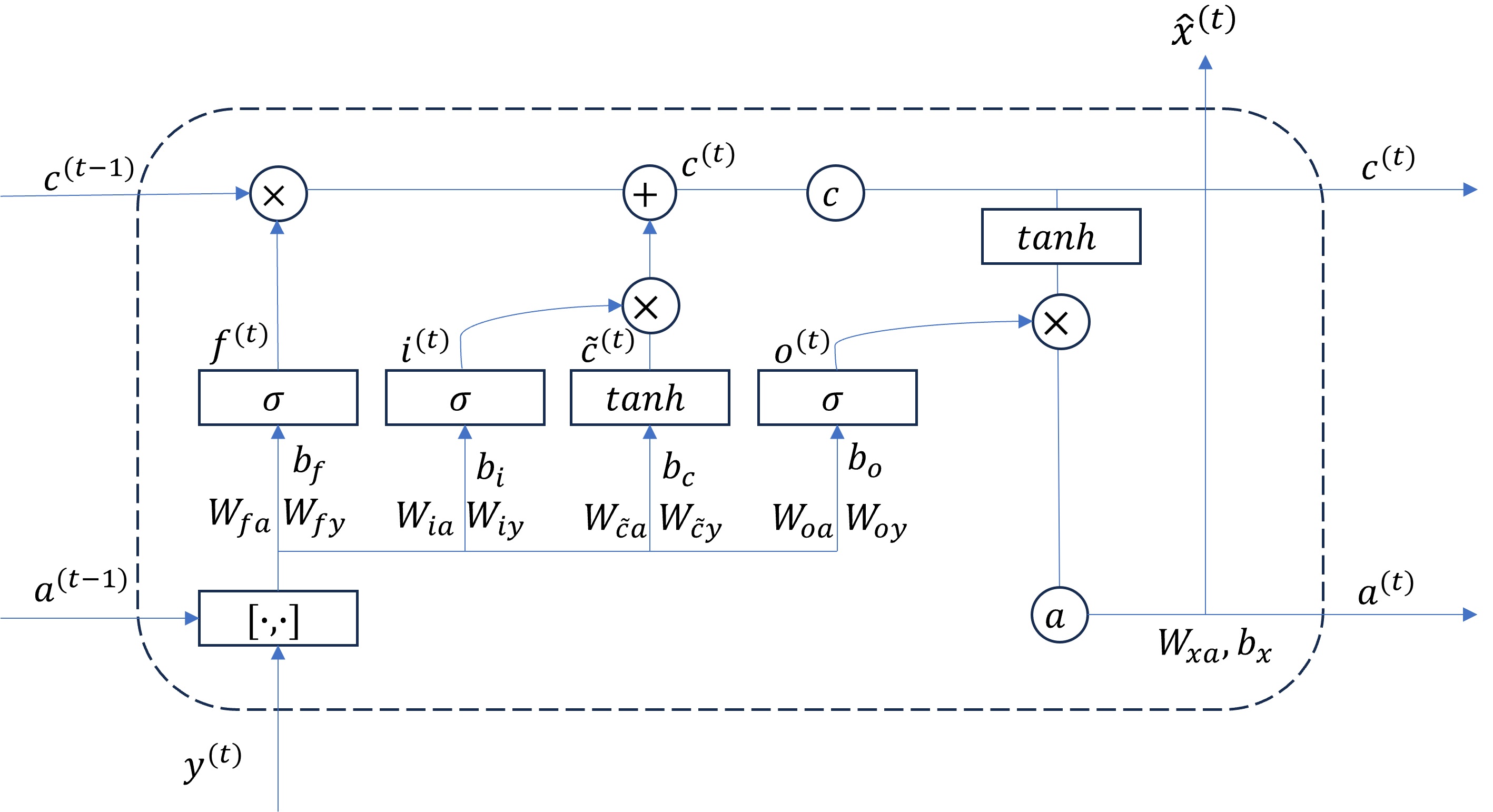}}
    \caption{Structure of an Elman long short term memory network(ELSTM) for state estimation. It uses hidden to hidden recurrent connections for state estimation of a nonlinear dynamical system. The symbols $y^{(t)}, a^{(t)}$ and $\hat{x}^{(t)}$ represent input measurement vector, hidden layer vector and estimated state vector respectively.The forget, input and output gates are represented by $f^{(t)}, i^{(t)}$ and $o^{(t)}.$ The cell state vector and the cell input activation vector are represented by $c^{(t)}$ and $\tilde{c}^{(t)}$ respectively. The weights and biases are represented by $W_{fy}, W_{fa}, W_{iy}, W_{ia}, W_{oy}, W_{oa}, W_{\tilde{c}y}, W_{\tilde{c}a}, W_{xa}, b_f, b_i, b_o$ and $b_{\tilde{c}}.$}
    \label{fig:elstm}
\end{figure}

The  forward propagation equations of ELSTM (see Fig. \ref{fig:elstm}) for state estimation are 
\begin{align}
    \begin{split}
        f^{(t)}&=\sigma(W_{fy} y^{(t)}+W_{fa} a^{(t-1)}+b_f)\\
        i^{(t)}&=\sigma(W_{iy} y^{(t)}+W_{ia} a^{(t-1)}+b_i)\\
        o^{(t)}&=\sigma(W_{oy} y^{(t)}+W_{oa} a^{(t-1)}+b_o)\\
        \tilde{c}^{(t)}&=\tanh(W_{\tilde{c}y} y^{(t)}+W_{\tilde{c}a} a^{(t-1)}+b_{\tilde{c}})\\
        c^{(t)}&=f^{(t)}\odot c^{(t-1)}+i^{(t)}\odot \tilde{c}^{(t)}\\
        a^{(t)}&=o^{(t)}\odot \tanh(c^{(t)})\\
        \hat{x}^{(t)}&=W_{xa} a^{(t)}+b_x 
    \end{split}
    \label{eqn:elstm}
\end{align}
where $f^{(t)}, i^{(t)}$ and $o^{(t)}$ are the forget, input and output gate's activation vectors respectively. Also, $\sigma(\cdot)$ is the recurrent activation function. The cell state vector and the cell input activation vector are represented by $c^{(t)}$ and $\tilde{c}^{(t)}$ respectively. The hidden layer vector is represented by $a^{(t)}.$ The weight matrices and biases are represented by $W_{fy}, W_{fa}, W_{iy}, W_{ia}, W_{oy}, W_{oa}, W_{\tilde{c}y}, W_{\tilde{c}a}, W_{xa}, b_f, b_i, b_o$ and $b_{\tilde{c}}.$ The symbol $\odot$ represents element-wise product. 

We now introduce here  a new structure called Jordan based long short-term memory network(JLSTM) for state estimation. The forward propagation equations of JLSTM (see Fig. \ref{fig:JLSTM}) for state estimation will be: 
\begin{align}
    \begin{split}
        f^{(t)}&=\sigma(W_{fy} y^{(t)}+W_{fx} \hat{x}^{(t-1)}+b_f)\\
        i^{(t)}&=\sigma(W_{iy} y^{(t)}+W_{ix} \hat{x}^{(t-1)}+b_i)\\
        o^{(t)}&=\sigma(W_{oy} y^{(t)}+W_{ox} \hat{x}^{(t-1)}+b_o)\\
        \tilde{c}^{(t)}&=\tanh(W_{\tilde{c}y} y^{(t)}+W_{\tilde{c}x} \hat{x}^{(t-1)}+b_{\tilde{c}})\\
        c^{(t)}&=f^{(t)}\odot c^{(t-1)}+i^{(t)}\odot \tilde{c}^{(t)}\\
        a^{(t)}&=o^{(t)}\odot \tanh(c^{(t)})\\
        \hat{x}^{(t)}&=W_{xa} a^{(t)}+b_x 
    \end{split}
    \label{eqn:jlstm}
\end{align}
where $f^{(t)}, i^{(t)}$ and $o^{(t)}$ are the forget, input and output gate's activation vectors respectively and function similar to the ELSTM except that they are now connected to the previous output layer instead of the previous hidden layer. The function $\sigma(\cdot)$ is the recurrent activation function. The cell state vector and the cell input activation vector are represented by $c^{(t)}$ and $\tilde{c}^{(t)}$ respectively. The hidden layer vector is represented by $a^{(t)}.$ The weight matrices and biases are represented by $W_{fy}, W_{fx}, W_{iy}, W_{ix}, W_{oy}, W_{ox}, W_{\tilde{c}y}, W_{\tilde{c}x}, W_{xa}, b_f, b_i, b_o$ and $b_{\tilde{c}}.$ The symbol $\odot$ represents element-wise product. 
\begin{figure}
\centering
    \framebox{\includegraphics[scale=0.35]{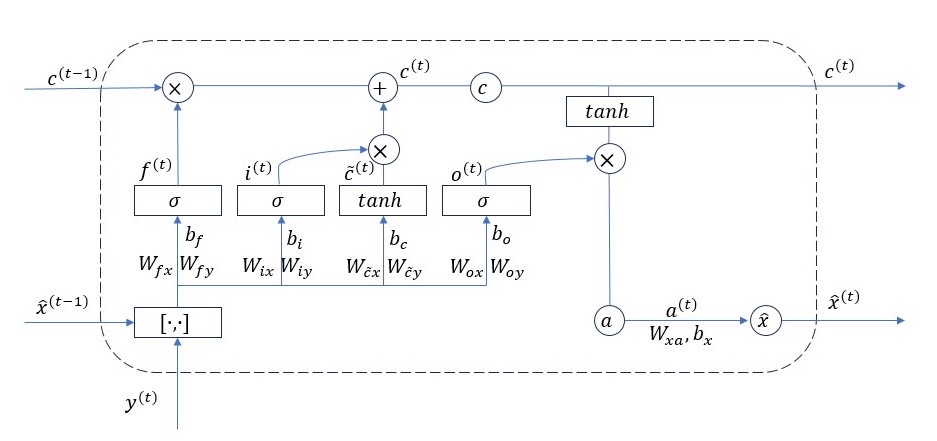}}
    \caption{Structure of a Jordan long short term memory network(JLSTM) for state estimation. It uses output to hidden recurrent connections. The symbols $y^{(t)}, a^{(t)}$ and $\hat{x}^{(t)}$ represent input measurement vector, hidden layer vector and estimated state vector respectively. The forget, input and output gates are represented by $f^{(t)}, i^{(t)}$ and $o^{(t)}.$ The cell state vector and the cell input activation vector are represented by $c^{(t)}$ and $\tilde{c}^{(t)}$ respectively. The weights and biases are represented by $W_{fy}, W_{fx}, W_{iy}, W_{ix}, W_{oy}, W_{ox}, W_{\tilde{c}y}, W_{\tilde{c}x}, W_{xa}, b_f, b_i, b_o$ and $b_{\tilde{c}}.$}
    \label{fig:JLSTM}
\end{figure}
The connections in a  JLSTM resemble the connections in a JRN. In particular, the previous output $\hat{x}^{(t-1)}$ is fed back into the network at the next time-step as can be seen in \eqref{eqn:jlstm} and Fig. \ref{fig:JLSTM}.

Note that simple recurrent neural networks are a special case of long short term memory networks where the gates are not involved. The introduction of $f^{(t)}, i^{(t)}, o^{(t)}, \tilde{c}^{(t)}$ and $c^{(t)}$ in \eqref{eqn:elstm}, \eqref{eqn:jlstm}, Fig. \ref{fig:elstm} and Fig. \ref{fig:JLSTM} is to handle long term dependencies better. Thus, the universal approximation theorems discussed in the previous section are valid. 

\section{Implementation of ELSTM and JLSTM}
\label{sec:implementation}

The fact that the functions $f$ and $h$ are known in \eqref{eqn:noisy_disc_state_space_sys} was used in data generation.
We considered sequences starting from time $0.$  Gaussian initial conditions were sampled over an interval with covariance of $(0.01) \times I$ where $I$ is an identity matrix of appropriate dimensions. Gaussian process and measurement noises with zero mean and known covariance were sampled for each time-step. This was done for each sequence in the data set. Measurement and process noise covariance were considered to be $(0.01) \times I$ where the  matrix $I$ is of appropriate dimensions. The number of hidden units considered were $50$ for both networks.

A total of $100$ sequences were generated. These were divided into three parts: training, validation and testing sequences in the ratio $80:10:10.$ 

ELSTM and JLSTM with forward propagation as in \eqref{eqn:elstm} and \eqref{eqn:jlstm} respectively are implemented. The weight matrices $W_{fy}, W_{iy}, W_{oy}, W_{\tilde{c}y}, W_{fa}, W_{ia}, W_{oa}, W_{\tilde{c}a}, W_{fx}, W_{ix}, W_{ox}$ and $W_{\tilde{c}x}$ are initialised using Glorot uniform distribution while the weight matrix $W_{xa}$ is initialised to be an orthogonal matrix using \textit{Tensorflow}'s $tensorflow.keras.initializers$ module. The recurrent activation function $\sigma(z)$ is chosen to be to be $sigmoid(z).$ The backward propagation is implemented using \textit{Tensorflow}'s $apply\_gradients$ module. 

The loss function at time-step $t$ for each sequence is 
\begin{equation}
    J_{i}^{(t)}(\phi)=\frac{1}{n}\sum_{i=1}^{n}{(x^{(t)}_{i}-\hat{x}^{(t)}_{i,\phi})}^{2}, 
\end{equation}
where $1\leq i\leq n$ and $n$ is the number of states, $x^{(t)}_{i}$ represents the true value of state $x_i$ at time-step $t$ and $\hat{x}^{(t)}_{i,\phi}$ represents the estimated value of state $x_i$ at time-step $t$ by the respective network where $\phi$ denotes hyper-parameters of the network. This is called the minimum variance cost function and was considered as we want the estimated state to be as close to the true state as possible. 

Adam optimization was used for training the networks. The number of hidden units and the maximum number of epochs were kept same for a particular example for both networks to make sure the comparison is accurate.
Early stopping with a fixed patience value(the same for both networks) to decide the number of epochs needed is used. In each epoch, the network's performance on validation data was used as the measure to decide whether to proceed or keep training for another epoch.

We tested JLSTM for 3 different examples in Section~\ref{sec:numerical_examples} and compared the results with ELSTM and KF for the linear system and EKF for the nonlinear systems. Standard implementations of KF and EKF are used, see for example \cite{dansimon}. 

Testing was done with two sets of initial conditions.   We first test the estimators using 10\% of the data set reserved for testing.  We then test using data generated from a random Gaussian initial condition with a mean different from that of the first set.  

We compare JLSTM, ELSTM and KF/EKF using average error at time $t$ over all features and all test sequences
\begin{equation}
    \text{Error}(t)=\frac{1}{m_{test} n} \sum_{k=1}^{m_{test}} \sum_{i=1}^{n} (x_i^{(t)[k]}-\hat{x}_i^{(t)[k]})^2,
\end{equation}
where $m_{test}$ is the number of test sequences, $n$ is the number of states, $x_i^{(t)[k]}$ represents the true value of state $x_{i}$ at time-step $t$ for the $k^{th}$ sequence and $\hat{x}_i^{(t)[k]}$ represents the estimated value of state $x_{i}$ at time-step $t$ for the $k^{th}$ sequence.
We also used normalised mean square error (NMSE) to compare different methods. It is calculated as
\begin{equation}
    \text{NMSE}=\frac{1}{m_{test} n T} \sum_{k=1}^{m_{test}} \sum_{j=1}^{T} \sum_{i=1}^{n} (x_i^{(j)[k]}-\hat{x}_i^{(j)[k]})^2,
\end{equation}
where $m_{test}$ is the number of test sequences, $T$ is the number of time-steps in each sequence, $n$ is the number of states, $x_{i}^{(j)[k]}$ represents the true value of state $x_{i}$ at time-step $j$ for the $k^{th}$ sequence and $\hat{x}_{i}^{(j)[k]}$ represents the estimated value of state $x_{i}$ at time-step $j$ for the $k^{th}$ sequence. 

\textit{Tensorflow} was used for the implementation of both networks. Codes were run on a T4 GPU.

\section{Examples}
\label{sec:numerical_examples}
In this section, we compare the proposed method with existing methods for several examples. Discrete-time state-space models can arise as discretisations of a system of ordinary differential equations used for estimating state of a system equipped with measurements. Several methods exist to discretise a continuous-time system of ordinary differential equations. We used zero order hold discretisation for the linear system and an RK-45 discretisation for the nonlinear systems of ODEs.

\subsection{Linear system: Connected springs}\label{LS}
We considered a zero-order hold time discretization with $\Delta t=0.1s$ of a system  of $10$ connected springs
\begin{equation}
\begin{split}
m_1 \ddot{x}_1(t) &= -k_1 x_1(t) + k_2 (x_2(t) - x_1(t)) \\
&\quad - d_1 \dot{x}_1(t) + d_2 (\dot{x}_2(t) - \dot{x}_1(t))\\
m_2 \ddot{x}_1(t) &= -k_2 (x_2(t) - x_1(t)) + k_3 (x_3(t) - x_2(t)) \\
&\quad - d_2 (\dot{x}_2(t) - \dot{x}_1(t)) + d_3 (\dot{x}_3(t) - \dot{x}_2(t))\\
\vdots \\
m_{9} \ddot{x}_{9}(t) &= -k_{9} (x_{9}(t) - x_{8}(t)) + k_{10} (x_{10}(t) - x_{9}(t)) \\
&\quad - d_{9} (\dot{x}_{9}(t) - \dot{x}_{8}(t)) + d_{10} (\dot{x}_{10}(t) - \dot{x}_{9}(t))\\
m_{10} \ddot{x}_{10}(t) &= -k_{10} (x_{10}(t) - x_{9}(t)) - d_{10} (\dot{x}_{10}(t) - \dot{x}_{9}(t))
\end{split}
\label{eq:connected_springs}
\end{equation}
where $m_i = 10, d_i = 6$ and $k_i = 800$ for all $i.$ The state vector consists of $20$ discretised states, the position $x_i(t)$ and velocity $\dot{x}_i(t)$ for each of the $10$ springs. The process noise is added to each state after discretization. The discretised measurement vector at time-step $t$ is given by 
$$y^{(t)}=
\begin{bmatrix}
    x_1^{(t)}\\ x_2^{(t)}\\ \vdots \\ x_{10}^{(t)}
\end{bmatrix}+\nu^{(t)}$$
where $x_i^{(t)}$ is the discretised state representing position of spring $i$ and $\nu^{(t)}$ is the measurement noise at time-step $t$. 

We considered sequences of length $500$ starting at time $0$ seconds. Gaussian initial conditions for each sequence were chosen randomly from $[-1,1]^{20}$ with a covariance of $(0.01) \times I_{20}$ where $I_{k}$ represents an identity matrix of order $k.$ All noises were assumed to be Gaussian with zero mean and known covariance. Measurement and process noise covariance were considered to be $(0.01) \times I$ where matrix $I$ is of appropriate dimensions. A total of $100$ sequences were considered with a batch size of $10.$ The number of hidden units considered were $50.$ Early stopping was implemented with a patience of $50$. The maximum number of epochs considered was $8000.$ Training using Adam optimization with a learning rate of $10^{-3}$ for ELSTM as well as JLSTM achieved a close NMSE value for both networks. 

\begin{figure}
\centering
    \framebox{\includegraphics[scale=0.38]{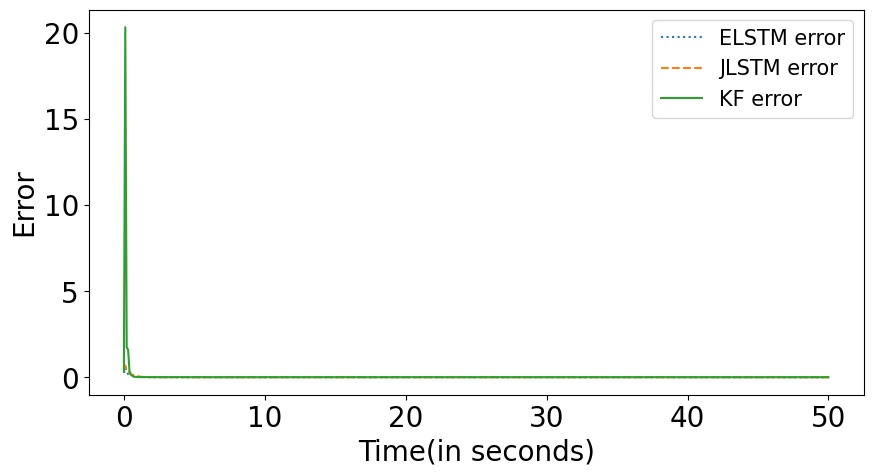}}
    \caption{This figure compares the performance of KF, JLSTM and ELSTM using the average errors over $10$ test sequences for $50$ seconds of $10$ connected springs with a noisy Gaussian initial condition.}
    \label{fig:connected_springs}
\end{figure}

While the ELSTM and JLSTM have almost equal errors at all time-steps (see Fig. \ref{fig:connected_springs}), the KF has the largest error at the initial time-step but converges quickly.

\begin{figure}
\centering
    \framebox{\includegraphics[scale=0.38]{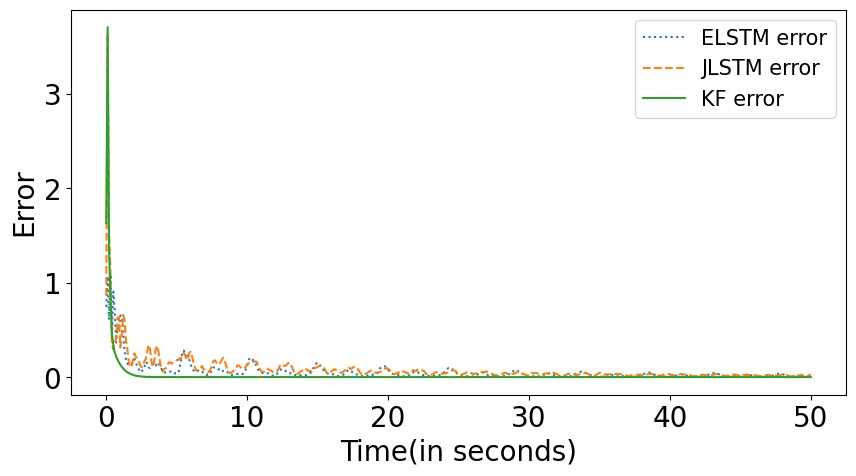}}
    \caption{This figure compares the performance of KF, JLSTM and ELSTM using the average errors over $10$ test sequences for $50$ seconds of $10$ connected springs with a noisy Gaussian initial condition outside the training region.}
    \label{fig:connected_springs_error_OR}
\end{figure}

For testing outside the initial interval of mean of initial condition, in this example, we used the interval ${[1,1.5]}^{20} \sim \mathbb{R}^{20}.$ As can be seen from Fig. \ref{fig:connected_springs_error_OR}, the ELSTM and JLSTM converge slower than the KF. This is not surprising as theory shows the KF to be the best estimator for a linear system. However, the closeness of the errors of the ELSTM and JLSTM validates the training of the networks. To keep the comparison fair, for outside the range testing, the KF was not changed.

\subsection{Nonlinear system: down pendulum}
We considered an RK-45 discretization with $\Delta t=0.01s$ of the down pendulum 
\begin{equation}
        \boldsymbol{\dot{x}}(t)=
        \begin{bmatrix}
            \dot{x_1}(t)\\ \dot{x_2}(t)
        \end{bmatrix}
            =
        \begin{bmatrix}
            x_2(t)\\
            -\frac{g}{l} \sin({x_1(t)})-\frac{b}{m} x_2(t)
        \end{bmatrix},
    \label{eq:pendulum}
\end{equation}
where $x_1(t)$ and $x_2(t)$ are the states representing position and velocity of the pendulum respectively. The parameters are $g=9.8 m/{sec}^2,$  $m=2 kg,$ $b=0.9 kg/sec$ and  $l=1 m.$ The process noise is added to each state after discretization. The discretised measurement vector at time-step $t$ is $y^{(t)}=x_1^{(t)}+\nu^{(t)}$ where $x_1{(t)}$ is the discretised state representing position of the pendulum and $\nu^{(t)}$ is the measurement noise at time-step $t$. 
 
We considered sequences of length $4000$ starting at time $0$ seconds. Gaussian initial conditions for each sequence were chosen randomly from $[-2,2] \times [-2,2]$ with an initial condition covariance of $(0.01) \times I_{2}$ where $I_{k}$ represents an identity matrix of order $k.$ All noises were assumed to be Gaussian with zero mean and known covariance. Measurement and process noise covariance were considered to be $(0.01) \times I$ where matrix $I$ is of appropriate dimensions. A total of $100$ sequences were considered with a batch size of $20.$ The number of hidden units considered were $50.$ Early stopping was implemented with a patience of $50$. The maximum number of epochs considered was $3000.$ Training using Adam optimization with a learning rate of $10^{-4}$ for ELSTM and $10^{-3}$ for JLSTM achieved a close NMSE value for both networks. 

\begin{figure}
\centering
    \framebox{\includegraphics[scale=0.37]{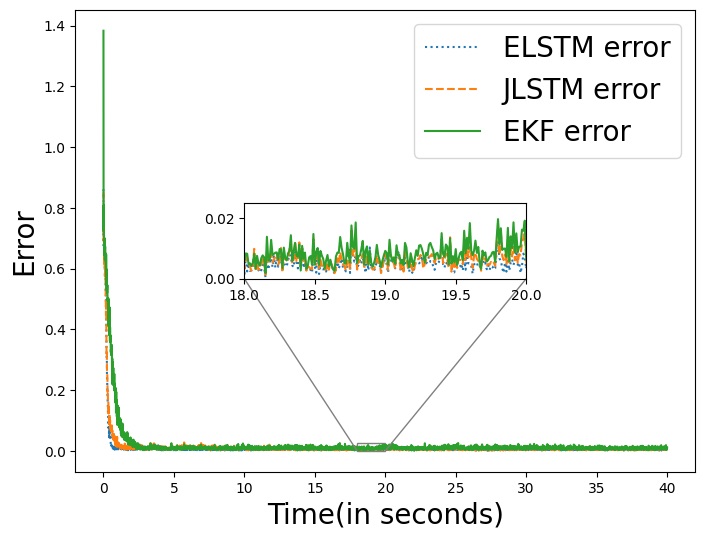}}
    \caption{This figure compares the performance of EKF, JLSTM and ELSTM using the normalised mean squared errors over $10$ test sequences for $40$ seconds of down pendulum with a noisy Gaussian initial condition.}
    \label{fig:pend_error}
\end{figure}

While the ELSTM and JLSTM have almost equal errors at all time-steps(see Fig. \ref{fig:pend_error}), the EKF has the largest error at the initial time-step and converges later than the other two. 

\begin{figure}
\centering
    \framebox{\includegraphics[scale=0.46]{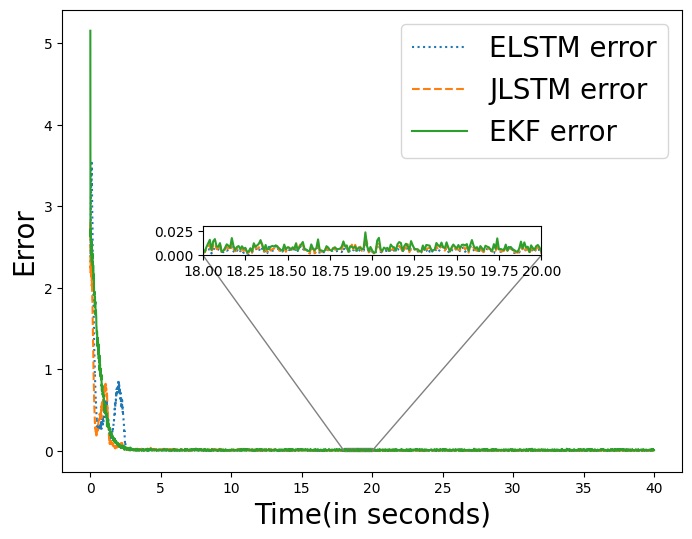}}
    \caption{This figure compares the performance of EKF, JLSTM and ELSTM using the normalised mean squared errors over $10$ test sequences for $40$ seconds of down pendulum with a noisy Gaussian initial condition outside the training region.}
    \label{fig:pend_error_OR}
\end{figure}

For testing outside the initial interval of mean of initial condition, in this example, we used the interval ${[2,2.5]}^2 \sim \mathbb{R}^2.$ As can be seen from Fig. \ref{fig:pend_error_OR}, the ELSTM and JLSTM perform better than the EKF especially at the initial time-steps. To keep the comparison fair, for outside the range testing, the EKF was not changed.

\subsection{Nonlinear system: reversed Van der Pol oscillator}
We considered an RK-45 discretization with $\Delta t=0.1s$ of the reversed Van der Pol oscillator 
\begin{equation}
        \boldsymbol{\dot{x}}(t)=
        \begin{bmatrix}
            \dot{x_1}(t)\\ \dot{x_2}(t)
        \end{bmatrix}
            = 
        \begin{bmatrix}
            -x_2(t)\\
            x_1(t)+(({x_1}(t))^2-1)x_2(t)
        \end{bmatrix}
    \label{eq:oscillator}
\end{equation}
where $x_1(t)$ and $x_2(t)$ is the position and velocity of the oscillator. The discretised measurement vector at time-step $t$ is $y^{(t)}=x_1^{(t)}+\nu^{(t)}$ where $\nu^{(t)}$ is the measurement noise at time-step $t$.

For this example, we have considered sequences of length $300$ starting from time $0.$ The Gaussian initial condition was sampled from $[-1, 1] \times [-1, 1]$ with covariance $(0.01) \times I_{2}$ where $I_{k}$ represents an identity matrix of order $k$ for each sequence. All noises were assumed to be Gaussian with zero mean and known covariance. Measurement and process noise covariance were again  $(0.01) \times I$ where matrix $I$ is of appropriate dimensions. A total of $100$ sequences were considered with a batch size of $20$. The number of hidden units considered were $50.$ Early stopping was implemented with a patience of $15$. The maximum number of epochs was set to be $3000.$ Training using Adam optimization with a learning rate of $10^{-3}$ for ELSTM and $10^{-2}$ for JLSTM achieves a close NMSE for both networks. 

\begin{figure}
\centering
    \framebox{\includegraphics[scale=0.45]{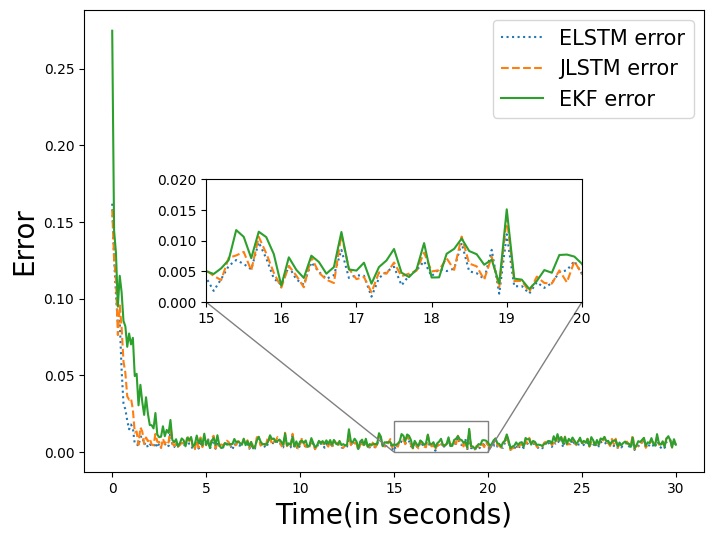}}
    \caption{This figure compares the performance of EKF, JLSTM and ELSTM using the average errors over $10$ test sequences for $30$ seconds of reversed Van der Pol oscillator with a noisy Gaussian initial condition.}
    \label{fig:reversed_van_der_pol_error}
\end{figure}

While the ELSTM and JLSTM have almost equal errors at all time-steps(see Fig. \ref{fig:reversed_van_der_pol_error}), the EKF has the largest error at the initial time-step and converges later than the other two estimators. This is  similar to the case of the  down pendulum. 

\begin{figure}
\centering
    \framebox{\includegraphics[scale=0.45]{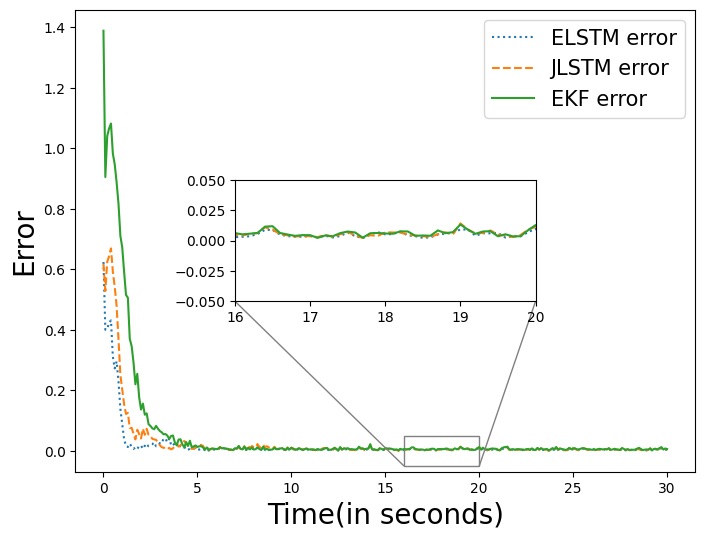}}
    \caption{This figure compares the performance of EKF, JLSTM and ELSTM using the average errors over $10$ test sequences for the first $30$ seconds of reversed Van der Pol oscillator with a noisy Gaussian initial condition outside the training region.}
    \label{fig:reversed_van_der_pol_error_OR}
\end{figure}

For testing outside the initial interval of mean of initial condition, in this example, we used the interval ${[1,1.5]}^2 \sim \mathbb{R}^2.$ As can be seen from Fig. \ref{fig:reversed_van_der_pol_error_OR}, the ELSTM and JLSTM perform better than the EKF especially at the initial time-steps. Again, for outside the range testing,  the EKF was not changed.  

\section{Results and conclusions}
\label{sec:results}

We have compared JLSTM, ELSTM and KF for a 100 dimensional linear system  and the JLSTM, ELSTM and EKF for two  nonlinear systems of order 2, the   down pendulum and the reversed Van der Pol oscillator.

\begin{table}
\caption{Normalised mean square error(NMSE) for E(KF), JLSTM and ELSTM}
\begin{center}
    \begin{tabular}{|c|c|c|c|c|c|}
        \hline
        \textbf{State space} &\multicolumn{3}{|c|}{\textbf{Estimator}}\\
        \cline{2-4}
        \textbf{system} & \textbf{\textit{E(KF)}} & \textbf{\textit{JLSTM}} & \textbf{\textit{ELSTM}}\\
        \hline
        Connected springs & 0.0174 & 0.0162 & 0.0153\\
        Down pendulum & 0.0695 & 0.0542 & 0.0461 \\
        r. Van der Pol & 0.0635 & 0.0485 & 0.0441 \\
        \hline
    \end{tabular}
    \label{tab:MSE}
    \end{center}
\end{table}

The results in Table \ref{tab:MSE} indicate that the NMSE value corresponding to JLSTM and ELSTM is close to the KF's NMSE value for the linear system. 

For both nonlinear systems, both JLSTM and ELSTM have a smaller NMSE than the EKF. This is likely  because there is less linearization error. Thus, the presence of a nonlinear activation function appears advantageous.  

\begin{table}
\caption{Training time(in seconds) for JLSTM and ELSTM}
\begin{center}
    \begin{tabular}{|c|c|c|c|c|}
        \hline
        \textbf{State space} &\multicolumn{2}{|c|}{\textbf{Estimator}}\\
        \cline{2-3}
        \textbf{system} & \textbf{\textit{JLSTM}} &  \textbf{\textit{ELSTM}}\\
        \hline
        Connected springs & 15983 & 16298 \\
        Down pendulum & 95644 & 99473 \\
        r. Van der Pol & 619 & 3389 \\
    \hline
    \end{tabular}
    \label{tab:time_to_train}
    \end{center}
\end{table}

Because the  learning rate was chosen so that the error values for ELSTM and JLSTM became comparable for all 3 examples, a comparison of their training times is relevant. The time taken to train an ELSTM was considerably larger than the time taken to train a JLSTM (Table \ref{tab:time_to_train}). This suggests that having direct connections with state estimates at the previous time-step helps to minimise the loss function faster.

\begin{table}
\caption{Testing time(in seconds) for E(KF), JLSTM and ELSTM}
\begin{center}
    \begin{tabular}{|c|c|c|c|c|c|}
        \hline
        \textbf{State space} &\multicolumn{3}{|c|}{\textbf{Estimator}}\\
        \cline{2-4}
        \textbf{system} & \textbf{\textit{E(KF)}} & \textbf{\textit{JLSTM}} &  \textbf{\textit{ELSTM}}\\
        \hline
        Connected springs & 0.8 & 0.3 & 0.3 \\
        Down pendulum & 38.2 & 2.7 & 2.7 \\
        r. Van der Pol & 3.2 & 0.2 & 0.2 \\
    \hline
    \end{tabular}
    \label{tab:time_to_test}
    \end{center}
\end{table}

While the time taken to train both the JLSTM and ELSTM is large as can be seen in Table \ref{tab:time_to_train}, the testing time only involves simple operations and is quite fast. We show their comparison for time taken to test $10$ sequences for each example in Table \ref{tab:time_to_test}. Both the LSTM architectures are faster than the KF and the EKF in the respective cases. The execution time for both the ELSTM and JLSTM is almost the same.

\begin{table}
\caption{Normalised mean square error(NMSE) for E(KF), JLSTM and ELSTM for initial conditions outside the training region}
\begin{center}
    \begin{tabular}{|c|c|c|c|c|c|}
        \hline
        \textbf{State space} &\multicolumn{3}{|c|}{\textbf{Estimator}}\\
        \cline{2-4}
        \textbf{system} & \textbf{\textit{E(KF)}} & \textbf{\textit{JLSTM}} & \textbf{\textit{ELSTM}}\\
        \hline
        Connected springs & 0.0179 & 0.0567 & 0.0459\\
        Down pendulum & 0.0850 & 0.0687 & 0.0700 \\
        r. Van der Pol & 0.0927 & 0.0586 & 0.0497 \\
        \hline
    \end{tabular}
    \label{tab:MSE_OR}
    \end{center}
\end{table}

When tested on data generated using initial conditions outside the training range, we observe from Table \ref{tab:MSE_OR} that the NMSE for JLSTM and ELSTM is much better than that for EKF for non-linear examples but more than the NMSE for KF for the linear example. It does appear from this that the networks are learning the model better for the nonlinear examples. 

We thus conclude that both  JLSTM and ELSTM provide smaller errors than an EKF for nonlinear systems. Using a JLSTM instead of an ELSTM  appears preferable because the JLSTM network has considerably smaller training time to achieve the same error.

\section{Future Work}

Given the success of JLSTM on low-order nonlinear systems, extending this approach to  high-order nonlinear systems is of value. A framework to speed up back-propagation which will in turn speed up the training process will be investigated. 

One advantage of the Jordan structure is that it mimics the structure of a dynamical system. This facilitates obtaining stability results. Some preliminary results in this direction are reported in \cite{cdc_submission} for a simpler Jordan architecture. An important goal is to establish stability and convergence properties of the JLSTM. 

\bibliographystyle{apacite} 
\bibliography{references_se.bib}

\end{document}